\newtheorem{thm}{Theorem}
\newtheorem{conj}[thm]{Conjecture}
\title{Strong edge-coloring of $2$-degenerate graphs}
\author{Gexin Yu$^{1}$ and Rachel Yu$^{2}$}
\address{
$^1$\small Department of Mathematics, William \& Mary, Williamsburg, VA 23185, USA.\\
$^2$\small Jamestown High School, Williamsburg, VA 23185, USA.
}
\email{gyu@wm.edu}
\begin{document}
\maketitle

\begin{abstract}
A {\em strong edge-coloring} of a graph $G$ is an edge-coloring in which every color class is an induced matching, and the strong chromatic index $\chi_s'(G)$ is the minimum number of colors needed in strong edge-colorings of $G$. A graph is {\em $2$-degenerate} if every subgraph has minimum degree at most $2$. Choi, Kim, Kostochka, and Raspaud (2016) showed $\chi_s'(G) \leq 5\Delta +1$ if $G$ is a $2$-degenerate graph with maximum degree $\Delta$.  In this article, we improve it to $\chi_s'(G)\le 5\Delta-\Delta^{1/2-\epsilon}+2$ when $\Delta\ge 4^{1/(2\epsilon)}$ for any $0<\epsilon\le 1/2$. 
\end{abstract}

\section{Introduction}

A {\em strong edge-coloring} of a graph $G$ is an edge-coloring in which every color class is an induced matching; that is, there are no 2-edge-colored triangles or paths of three edges. The {\em strong chromatic index $\chi_s'(G)$} is the minimum number of colors in a strong edge-coloring of $G$.  This notion was introduced by Fouquet and Jolivet \cite{FJ83} and one of the main open problems was proposed by Erd\H{o}s and Ne\v{s}et\v{r}il~\cite{EN85} during a seminar in Prague: 

\begin{conj}[Erd\H{o}s and Ne\v{s}et\v{r}il, 1985]
If $G$ is a simple graph with maximum degree $\Delta$, then $\chi_s'(G)\le 5\Delta^2/4$ if $\Delta$ is even, and $\chi_s'(G)\le (5\Delta^2-2\Delta+1)/4$ if $\Delta$ is odd. 
\end{conj}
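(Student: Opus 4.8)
The plan is to reformulate the problem as a vertex coloring of the square of the line graph: two edges of $G$ must receive distinct colors in a strong edge-coloring precisely when they lie within distance two in $L(G)$, so that $\chi_s'(G)=\chi(L(G)^2)$. A naive greedy bound follows from the observation that every edge $uv$ has at most $2\Delta^2-2\Delta$ other edges within distance two (edges incident to $u$ or to $v$, and edges incident to the at most $2(\Delta-1)$ further neighbors of $u$ and $v$), so a greedy coloring already gives $\chi_s'(G)\le 2\Delta^2-2\Delta+1$. The entire difficulty is to push the leading constant from $2$ down to the conjectured $5/4$. The extremal example is the balanced blow-up $C_5[\Delta/2]$, obtained by replacing each vertex of $C_5$ with an independent set of size $\Delta/2$ and joining consecutive sets completely: it is $\Delta$-regular, has $5\Delta^2/4$ edges, and every pair of edges is within distance two, so all of them need distinct colors. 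This shows $5/4$ cannot be improved and that any valid proof must be essentially \emph{tight}, with no room for the slack that crude or probabilistic arguments introduce.

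First I would set up the probabilistic framework behind the strongest current bounds: randomly split the palette, then run a semi-random (R\"odl nibble) iteration controlled by the Lov\'asz Local Lemma, so that after each round every edge $uv$ retains an expected number of available colors proportional to the size of its conflict neighborhood \emph{discounted by local repetitions}. The entire saving over the greedy count comes from \emph{local sparsity}: whenever two edges $e,f$ that both conflict with $uv$ also conflict with each other, one color can serve both, so the number of colors genuinely needed around $uv$ is governed by the \emph{independence number}, not the cardinality, of its conflict neighborhood in $L(G)^2$. The key quantitative step is therefore a local-sparsity lemma bounding, for every edge $uv$, the number of pairwise non-conflicting edges within distance two of it, with target value $\tfrac54\Delta^2(1+o(1))$.

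The decisive structural step I would pursue is a stability analysis of this conflict neighborhood, carried out by discharging. The aim is to show that if the independence number of the conflict neighborhood of some edge approaches the greedy maximum $\approx 2\Delta^2$, then the second neighborhood of $uv$ must look like a disjoint union of near-complete bipartite pieces glued at $u$ and $v$ — exactly the local configuration forced by a $C_5$-blow-up. One then splits into two regimes: in the \emph{sparse} regime, where the local structure is bounded away from a $C_5$-blow-up, the probabilistic argument of the previous paragraph delivers a palette of size below $\tfrac54\Delta^2$; in the \emph{dense/near-extremal} regime, a direct list-coloring argument exploiting the explicit bipartite structure recovers exactly $5\Delta^2/4$ colors. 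Combining the two cases would, in principle, close the gap.

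I must be candid that this last step is precisely where the problem remains open. The conjecture of Erd\H{o}s and Ne\v{s}et\v{r}il is unproven for general $\Delta$, and the best unconditional results produced by exactly this probabilistic-plus-sparsity program currently give only $\chi_s'(G)\le 1.77\Delta^2(1+o(1))$ for large $\Delta$. The main obstacle is that the Local Lemma controls $\chi(L(G)^2)$ through the \emph{maximum local density} of the conflict graph, and forcing the resulting constant down to $5/4$ appears to require a complete classification of the near-extremal local configurations — effectively a proof that only $C_5$-blow-ups saturate the bound. No such classification is presently available, so for general $\Delta$ the statement stands as a conjecture rather than a theorem, and the outline above should be read as the natural line of attack rather than a completed argument.
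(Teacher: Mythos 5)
The statement you were handed is labeled a \emph{conjecture} in the paper, and the paper contains no proof of it: the Erd\H{o}s--Ne\v{s}et\v{r}il bound is open for every $\Delta\ge 4$ (it is known only for $\Delta\le 3$, and for $\Delta=4$ the best result, due to Huang, Santana and Yu, is $21$, one more than the conjectured $20$). Your write-up correctly recognizes this, and your factual scaffolding is accurate: the reformulation $\chi_s'(G)=\chi(L(G)^2)$, the greedy bound $2\Delta^2-2\Delta+1$, the blow-up of $C_5$ by independent sets of size $\Delta/2$ showing that $5\Delta^2/4$ cannot be lowered for even $\Delta$ (it is $\Delta$-regular with $5\Delta^2/4$ pairwise conflicting edges), and the fact that the strongest unconditional bound, $1.772\Delta^2$ for large $\Delta$ (Hurley, de Joannis de Verclos and Kang, cited in the paper), comes from exactly the semi-random/local-occupancy program you describe. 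As a survey of the natural line of attack, your account is sound.

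But as a proof there is a genuine gap, which to your credit you name yourself: the stability step --- classifying edges whose conflict neighborhood is near-extremal as locally $C_5$-blow-up-like, and then finishing the dense regime with a direct list-coloring using exactly $5\Delta^2/4$ colors --- is asserted as a plan, not carried out, and no argument of this kind is known to work. The deeper obstruction is quantitative, not merely organizational: the nibble/Local Lemma machinery is intrinsically lossy, in the sense that even with the best possible local-sparsity input it cannot be driven down to the constant $5/4$; this is why the state of the art sits at $1.772$ rather than near the conjectured value. So the sparse/dense dichotomy you sketch does not close the argument, and since neither the paper nor the literature contains a proof, your proposal cannot be credited as one. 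Its value lies in correctly locating where the difficulty is: absent a genuinely new idea beyond local density control, the statement remains a conjecture, which is precisely how the paper presents it.
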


This conjecture is true for $\Delta\le 3$, see \cite{A92, HHT93}. For $\Delta=4$, Huang, Santana and Yu \cite{HSY18} showed that $\chi_s'(G)\le 21$, one more than the conjectured upper bound $20$.  Chung, Gy\'arf\'as, Trotter, and Tuza (1990, \cite{CGTT90}) confirmed the conjecture for $2K_2$-free graphs.  Using probabilistic methods, Molloy and Reed \cite{MR97}, Bruhn and Joos \cite{BJ18}, Bonamy, Perrett, and Postle \cite{BPP22}, and recently Hurley, Verclos, and Kang~\cite{HVK22} showed that $\chi_s'(G)\le 1.772\Delta^2$ for sufficiently large $\Delta$.

Sparse graphs have also attracted a lot of attention.  Interested readers may see the survey paper \cite{DYZ19} for more details. In this article, we are interested in $k$-degenerate graphs when $k=2$.  A graph is {\em $k$-degenerate} if every subgraph has minimum degree at most $k$.  

Let $G$ be a $2$-degenerate graph with maximum degree $\Delta\ge 2$. Chang and Narayanan \cite{CN13} proved that  $\chi_s'(G)\le 10\Delta-10$. Luo and Yu \cite{LY12} improved it to $\chi_s'(G)\le 8 \Delta-4$.  For arbitrary values of $k$,  Wang~\cite{W14} improved the result of Yu~\cite{Y15} and showed the following result.

\begin{thm} \label{thm:k-degenerate}
If $G$ is a $k$-degenerate graph with maximum degree $\Delta\ge k$, then $\chi _s ^{\prime} (G) \leq (4k-2) \Delta - 2k^2 +1$.
\end{thm}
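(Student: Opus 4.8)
The plan is to derive the bound from a greedy coloring performed in a carefully chosen order, together with a count of the previously used colors forbidden at each step. Since $G$ is $k$-degenerate, fix an ordering $v_1,\dots,v_n$ of its vertices in which every vertex has at most $k$ neighbors of smaller index; call these its \emph{back-neighbors} and those of larger index its \emph{forward-neighbors}, and orient each edge from its larger-indexed endpoint to its smaller one, so that every vertex has out-degree at most $k$ (though its in-degree may be as large as $\Delta$). Two edges must receive distinct colors exactly when they share an endpoint or an edge of $G$ joins them. I would color the edges greedily, and the entire argument turns on the order: I process the edges in increasing order of the \emph{smaller} index of their two endpoints, breaking ties within a vertex arbitrarily. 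The purpose of this order is to defuse the one genuinely dangerous configuration, namely a low-index vertex $u$ of large degree whose neighbors again have large degree, which by itself places $\Theta(\Delta^2)$ edges within distance one of an edge at $u$; in the chosen order every edge incident to $u$ is colored before any edge whose smaller endpoint is a forward-neighbor of $u$, so this quadratic family never contributes to the forbidden set.

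It then suffices to show that when we reach an edge $e=v_iv_j$ with $i<j$, the number of already-colored edges conflicting with $e$ is at most $(4k-2)\Delta-2k^2$, for then a free color remains among $(4k-2)\Delta-2k^2+1$. I would partition these conflicting edges into four groups: those incident to $v_i$; those incident to $v_j$; those incident to a neighbor of $v_i$ but to neither $v_i$ nor $v_j$; and those incident to a neighbor of $v_j$ but to neither endpoint. The first group has at most $\deg(v_i)-1\le\Delta-1$ members. For the second group the order is decisive: an edge $v_jv_c$ is colored before $e$ only if $c<i$, which forces $v_c$ to be a back-neighbor of $v_j$, and there are at most $k-1$ of these besides $v_i$. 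For the last two groups I would split the relevant neighbor $x$ into a back-neighbor, contributing at most $\deg(x)-1\le\Delta-1$ edges with at most $k$ such $x$, or a forward-neighbor, which by the out-degree bound can contribute only its at most $k$ out-edges that reach an index below $i$. Each group is therefore $O(k\Delta)$, giving a linear bound.

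The hard part is sharpening this linear estimate to the exact constant $(4k-2)\Delta-2k^2$. A direct summation of the four groups overshoots for two reasons: an edge joining a neighbor of $v_i$ to a neighbor of $v_j$ is counted in both of the last two groups, and the crude inequality $\deg(x)-1\le\Delta-1$ ignores that the edges from $x$ back to $v_i$, $v_j$, or to vertices already counted must be discarded. Recovering the correction term $-2k^2$ amounts to bookkeeping exactly how the two back-neighbor sets overlap and how the forward out-edges of $v_i$ and of $v_j$ interact, and it is here that the assumption $\Delta\ge k$ is used to keep the resulting quantity nonnegative so that the greedy step never stalls. I would first run the whole argument for $k=2$, where the target $6\Delta-7$ keeps the case analysis short, and then promote the same back-neighbor/forward-neighbor dichotomy to arbitrary $k$.
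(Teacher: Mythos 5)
First, a point of comparison: the paper does not prove this theorem at all --- it is quoted as Wang's result \cite{W14} (improving Yu \cite{Y15}), so there is no in-paper argument to measure you against; I am comparing your proposal to the known proof, and your outline is essentially that proof: fix a degeneracy order, color edges greedily in increasing order of the smaller endpoint, and bound the colored conflicts at an edge $e=v_iv_j$ ($i<j$) via exactly your four groups. The plan does close, but the decisive sharpening is one you only half-state: in your second paragraph you let a forward-neighbor $x$ of $v_i$ contribute ``at most $k$'' colored out-edges reaching below $i$, whereas it contributes at most $k-1$, because $v_i$ itself occupies one of the at most $k$ back-slots of $x$ and the edge $xv_i$ already sits in your first group; symmetrically, a forward-neighbor $y$ of $v_j$ contributes at most $k-1$ since $v_j$ consumes one of $y$'s out-slots and the edge $yv_j$ is not yet colored. (Your third paragraph does flag that edges back to $v_i$ and $v_j$ must be discarded, so you have located the right correction, just not attached it to the right term.) With this refinement the four groups are bounded, in the worst configuration, by $\Delta-1$ (edges at $v_i$), $k-1$ (edges at $v_j$, by your order argument), $k(\Delta-1)+(\Delta-1-k)(k-1)$ (neighbors of $v_i$, split into at most $k$ back-neighbors and the remaining forward ones), and $(k-1)(\Delta-1)+(\Delta-k)(k-1)$ (neighbors of $v_j$, noting at most $k-1$ back-neighbors of $v_j$ besides $v_i$ can lie below $i$), and the sum telescopes to $2k(\Delta-1)+2(k-1)(\Delta-k)=(4k-2)\Delta-2k^2$, exactly what greedy with $(4k-2)\Delta-2k^2+1$ colors requires. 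Two smaller remarks: with the unrefined ``$k$'' the same count gives only roughly $4k\Delta-2k^2$, i.e., essentially the earlier bound of Yu \cite{Y15} that Wang's theorem improves, so the $k\mapsto k-1$ step is not optional bookkeeping but the entire improvement; and your worry about edges counted in both of the last two groups is unnecessary, since double counting only inflates an upper bound and the tally above already meets the target without exploiting any overlap. The hypothesis $\Delta\ge k$ enters only to justify the worst-case maximization (e.g., that placing a back-neighbor below index $i$, worth $\Delta-1$, dominates placing it between $i$ and $j$, worth $k$), not to prevent the greedy from stalling. So: correct approach, coinciding with the cited source's, and completable as proposed once the $k-1$ refinement is made explicit.
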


This implies that $\chi_s'(G)\le 6 \Delta-7$ for $2$-degenerate graph $G$. Choi, Kim, Kostochka, and Raspaud~\cite{CKKR18} further improved it to $\chi_s'(G) \leq 5 \Delta +1$ in 2016. Many believe that the optimal bound should be $4\Delta+C$ for some constant $C$, but no progress has yet been made.

In this article, we show that for any $0<\epsilon\le 1/2$, and $\Delta\ge 4^{1/(2\epsilon)}$, $\chi_s'(G)\le 5\Delta-\Delta^{1/2-\epsilon}+2$ for $2$-degenerate graph $G$ with maximum degree $\Delta$.

\section{Main result and its proof}

A {\it special vertex} is a vertex with at most two neighbors of degree more than two.  Every $2$-degenerate graph contains special vertices, which are the new $2^-$-vertices after we remove all vertices of degree at most two.  Let $G$ be a $2$-degenerate graph and $S$ be the set of special vertices of $G$. For each $u\in S$, there exists a set $W_u$ of vertices such that each $w\in W_u$ shares some $2$-neighbors with $u$.  
The {\em capacity of special vertices} of $G$ is the maximum number of common $2$-neighbors that are shared by a vertex in $S$ and other vertices in $G$. A {\em pendant edge} is an edge incident with a leaf (a vertex of degree one).  Below is the main result of this article.

\begin{thm}\label{main}
For any $0<\epsilon\le 1/2$, let $D$ be a positive integer when $D\ge 4^{1/(2\epsilon)}$, for any $2$-degenerate graph $G$ with maximum degree $\Delta$,  if a vertex $u$ is adjacent to at least $d(u)-D$ leaves  when it has $d(u)>D$, and 
\begin{itemize}
    \item $\Delta\le D+2$ when the capacity of special vertices is at least $D^{1/2-\epsilon}$, and
    \item $\Delta\le D+D^{1/2-\epsilon}$ when the capacity of special vertices is less than $D^{1/2-\epsilon}$,
\end{itemize}
then $\chi_s'(G)\le 5D-D^{1/2-\epsilon}+2$. 
\end{thm}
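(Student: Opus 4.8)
The plan is to prove Theorem~\ref{main} by induction on the number of edges of $G$, using a discharging-free, purely structural reducibility argument. The statement is engineered so that the inductive hypothesis can be applied to a smaller $2$-degenerate graph satisfying the same degree and capacity constraints; the quantity $5D - D^{1/2-\epsilon} + 2$ is the target palette size throughout. First I would fix a minimum counterexample $G$ with respect to edge count, so that $\chi_s'(G) > 5D - D^{1/2-\epsilon}+2$ but every $2$-degenerate graph with fewer edges (and meeting the hypotheses) admits a strong edge-coloring with at most that many colors. The overall strategy is to locate a structurally constrained edge or local configuration in $G$, delete it to obtain $G'$, invoke the inductive coloring of $G'$, and then show that the deleted edge can always be colored because the number of forbidden colors is strictly smaller than the palette.

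Next I would exploit $2$-degeneracy to find the right reducible configuration. Since $G$ is $2$-degenerate, it has a vertex of degree at most two; more usefully, after peeling off all $2^-$-vertices one is left with the special vertices defined in the preamble, each of which has at most two high-degree neighbors. The leaf hypothesis (``$u$ adjacent to at least $d(u)-D$ leaves when $d(u)>D$'') is the crucial device that keeps the degree effectively bounded by $D$ for coloring purposes: for a pendant edge $uv$ with $v$ a leaf, the only edges at strong distance at most two from $uv$ are the other edges incident to $u$, so a leaf edge sees at most $d(u)-1$ colors from $u$'s side. I would therefore treat two regimes. When $d(u) \le D$, bounding the number of edges within strong distance two of a chosen edge is routine and stays below the palette. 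The delicate regime is $d(u) > D$, where almost all neighbors of $u$ are leaves, and the count of genuinely obstructing colors is governed by the non-leaf neighbors together with their $2$-neighborhoods.

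The heart of the argument is the careful accounting of forbidden colors around a special vertex, split according to the two bullet conditions on the capacity of special vertices. When the capacity is at least $D^{1/2-\epsilon}$ the hypothesis forces $\Delta \le D+2$, which tightens the bound on how many colored edges lie within strong distance two of the target edge; when the capacity is smaller the permitted $\Delta$ rises to $D + D^{1/2-\epsilon}$, but the scarcity of shared $2$-neighbors compensates, again keeping the forbidden set below $5D - D^{1/2-\epsilon}+2$. In each case I would count the edges incident to the two endpoints of the chosen edge and the edges incident to their neighbors, reusing the $2$-degenerate structure to bound the number of high-degree neighbors, and then verify that the subtraction of the $D^{1/2-\epsilon}$ term is exactly absorbed by the savings from leaves and from the capacity restriction. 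The inequality $D \ge 4^{1/(2\epsilon)}$, equivalently $D^{1/2-\epsilon} \ge 2 D^{1/2} \cdot D^{-\epsilon} \cdot (\text{const})$, is what makes these savings dominate the error terms, so I would keep it in reserve to close the final numerical gap.

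The main obstacle, as I see it, is the capacity bookkeeping: shared $2$-neighbors create edges that are mutually at strong distance two and thus must all receive distinct colors, so overcounting or undercounting these shared paths is where the argument is most fragile. Getting the definition of $W_u$ to interact correctly with the strong-distance-two constraint—ensuring that every forbidden color is charged exactly once and that the leaf edges really do free up the claimed number of colors—will require the most care, and it is precisely there that the two-case split on capacity and the threshold $D^{1/2-\epsilon}$ must be balanced against $\Delta$. I expect the rest (the base case, the reduction, and the final color-availability inequality) to follow by direct, if tedious, counting once the configuration and its forbidden-color set are pinned down.
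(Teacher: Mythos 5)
Your proposal has a genuine gap at its core: you plan to delete a single edge, color the smaller graph inductively, and then color the deleted edge greedily ``because the number of forbidden colors is strictly smaller than the palette.'' But this counting simply does not close. For an edge $uv_{i,j}$ from a special vertex $u$ to a $2$-neighbor shared with $w_i$, the set $N_2(uv_{i,j})$ has size roughly $d(u_1)+d(u_2)+d(w_i)+2d(u)-5$, which with $\Delta\approx D$ is about $5D$ --- already at or above the target palette size $5D-D^{1/2-\epsilon}+2$. Greedy counting alone is exactly what yields the known $5\Delta+1$ bound of Choi, Kim, Kostochka, and Raspaud; to save the extra $D^{1/2-\epsilon}$ one must \emph{force repeated colors} inside $N_2(uv_{i,j})$, and your proposal contains no mechanism for doing so. The paper's mechanism is the step you are missing: the reduced graph $G'$ is built not just by deleting the edges from $u$ to its $2$-neighbors but also by \emph{adding pendant edges} to each $w_i$ (up to $D^{1/2-\epsilon}$ of them in the low-capacity case, two of them in the high-capacity case). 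The colors that the inductive coloring places on these artificial pendant edges are then swapped onto the edges $w_iv_{i,j}$ so that many colors appear at least twice among the edges incident to the $w_i$'s; it is this engineered repetition that drops the number of \emph{distinct} forbidden colors below the palette size. This also explains the strange-looking leaf hypothesis in the statement, whose real purpose is not (as you suggest) that ``a leaf edge sees at most $d(u)-1$ colors'' --- in fact a pendant edge at $u$ still conflicts with all edges incident to $u$ \emph{and} to $u$'s neighbors --- but rather that the hypothesis remains true for $G'$ after the pendant edges are grafted on, possibly pushing $d(w_i)$ above $D$.

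A second, related problem is your induction measure. Because $G'$ gains pendant edges (potentially up to $s\cdot D^{1/2-\epsilon}$ of them, more than the $d(u)-2$ edges removed), $G'$ can have \emph{more} edges than $G$, so induction on the number of edges cannot even get started with this construction; the paper instead inducts on the number of vertices of degree at least $2$, which does strictly decrease since $u$ drops to degree $2$ and all new vertices are leaves. Moreover, even for your plain edge-deletion scheme, minimality with respect to edges is problematic: deleting an edge can create new special vertices and thereby \emph{increase} the capacity of special vertices, so a graph satisfying the second bullet ($\Delta\le D+D^{1/2-\epsilon}$, capacity small) can turn into one that violates the first bullet's requirement $\Delta\le D+2$, and the inductive hypothesis no longer applies to $G'$. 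So both the reduction and the accounting need the paper's pendant-edge construction; without it, the ``savings'' you invoke from leaves and from the capacity split have no source.
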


\begin{proof}
Let $G$ be a counterexample with the fewest number of vertices of degree at least two. 

Since $G$ is $2$-degenerate, $G$ must have a set $S$ of special vertices. Let $u\in S$ and $W_u$ be the set whose vertices share $2$-neighbors with $u$ such that the maximum number of common $2$-neighbors of $u$ and vertices of $W_u$ is the capacity of special vertices of $G$. Let $W_u=\{w_1, \ldots, w_s\}$ and $u_1,u_2$ be the two neighbors of $u$ with degree more than $2$. For each $w_i\in W_u$, let $W_i=\{v_{i,1},\ldots,v_{i,t_i}\}$ be the common $2$-neighbors of $w_i$ and $u$. Then $N(u)=\{u_1, u_2\}\cup \bigcup_{i=1}^s \{v_{i,1},\ldots,v_{i,t_i}\} $.  We assume that $t_1\ge t_2\ge \ldots\ge t_s\ge 1$.   Then $t_1$ is the capacity of special vertices of $G$.  It is not hard to see that $u$ has no neighbors of degree one and $w_i$ has degree at least two for each $i$.

For edge $uv$, let $N_2(uv)$ be the set of edges $xy$ such that $x$ or $y$ is adjacent to $u$ or $v$. By definition, $uv$ should have a color different from the colors on edges in $N_2(uv)$ in a valid strong edge-coloring.

\medskip

{\bf Case 1.}  $t_1<D^{1/2-\epsilon}$.  In this case, we have $\Delta\le D+D^{1/2-\epsilon}$.

\medskip

Let $G'$ be the graph obtained from $G-\{uv_{1,1},\ldots,uv_{s,t_s}\}$ by adding up to $D^{\frac{1}{2}-\epsilon}$ pendant neighbors to each of  $\{w_1,\ldots,w_s\}$ so that $w_i$ has degree at most $D+D^{1/2-\epsilon}$ and $w_i$ has at least $D^{\frac{1}{2}-\epsilon}$ pendant neighbors. 
Then the graph $G'$ has fewer vertices of degree at least $2$ and can be colored with $5D-D^{1/2-\epsilon}+2$ colors. We modify the coloring of $G'$ to obtain a coloring of $G$ according to the following algorithm.

\begin{enumerate}
\item Keep the colors of edges that appear in both $G$ and $G'$, but if $w_iv_{i,j}$ for some $i,j$ in $G$ has the same color as $uu_1$ or $uu_2$, then we switch color of $w_iv_{i,j}$ with a color on other pendant edges incident to $w_i$ in $G'$.

\item For each $i$, if a color $c$ appears on both a pendant edge incident to $w_i$ in $G'$ and an edge incident to $u_1$ or $u_2$ (not including $uu_1$ and $uu_2$), then we switch the color of $w_iv_{i,j}$ for some $j$ with the color $c$.

\item After (2), if a color $c$ appears on pendant edges of two or more vertices in $W_u$ in $G'$, then we switch the color of $w_iv_{i,j}$ for some $j$ with $c$ for each such vertex $w_i\in W_u$.

\item After (2) and (3), we color the edges $uv_{1,1},\ldots,uv_{s,t_s}$ in reverse order with colors available to them. 
\end{enumerate}

Now we show that the above algorithm gives a valid strong edge-coloring of $G$.  To do that, we only need to show that each of the edges in $\{uv_{1,1},\ldots,uv_{s,t_s}\}$ can be colored. Consider $uv_{i,j}$ for $1\le i\le s$ and $1\le j\le t_i$.  It needs to get a color not on edges in $N_2(uv_{i,j})$.  Note that $N_2(uv_{i,j})$ contains the edges incident to $u_1, u_2, w_i$ and the edges incident to the $2$-neighbors of $u$; So the number of colored edges in $N_2(uv_{i,j})$, denoted as $n_2(uv_{i,j})$, is at most $$d(u_1)+d(u_2)+d(w_i)+d(u)-3+d(u)-2-t_i-\sum_{p=1}^{i-1}t_p-(j-1).$$ 

We assume that $n_2(uv_{i,j})\geq 5D-D^{1/2-\epsilon}+2$, for otherwise, $uv_{i,j}$ can be colored. Because of the way the edges being colored, we have some repeated colors on edges incident to the $2$-neighbors of $u$, namely $v_{1,1}w_1, v_{1,2}w_1, \ldots, v_{s,t_s}w_s$. The number of colors on $N_2(uv_{i,j})$ and edges incident to $w_i$ for $i\in [s]$ (with repetition) is $n_2(uv_{i,j})+D^{1/2-\epsilon}\cdot (s-1)$.  Thus  $n_2(uv_{i,j})+D^{\frac{1}{2}-\epsilon}(s-1)-(5D-D^{\frac{1}{2}-\epsilon}+2)$ colors are repeated. As each $w_i$ may allow only one edge (for example, $t_i=1$) whose color is the same as other edges in $N_2(uv_{i,j})$,  at least $\frac{n_2(uv_{i,j})+D^{\frac{1}{2}-\epsilon}(s-1)-(5D-D^{\frac{1}{2}-\epsilon}+2)}{D^{\frac{1}{2}-\epsilon}}$ edges have the same colors as others. Since $t_1<D^{1/2-\epsilon}$ and $s\ge \frac{d(u)-2}{t_1}\ge \frac{d(u)-2}{D^{1/2-\epsilon}}$, the number of different colors in $N_2(uv_{i,j})$ is at most
\begin{align*}
& n_2(uv_{i,j})-\frac{n_2(uv_{i,j})+D^{\frac{1}{2}-\epsilon}(s-1)-(5D-D^{\frac{1}{2}-\epsilon}+2)}{D^{\frac{1}{2}-\epsilon}}\\
\le&n_2(uv_{i,j})(1-\frac{1}{D^{\frac{1}{2}-\epsilon}})-(s-1)+5D^{1/2+\epsilon}-1+\frac{2}{D^{\frac{1}{2}-\epsilon}}\\
\leq & (d(u_1)+d(u_2)+2d(u)-5+d(w_i)-t_1)(1-\frac{1}{D^{\frac{1}{2}-\epsilon}})-\frac{d(u)-2}{D^{1/2-\epsilon}}+5D^{1/2+\epsilon}+\frac{2}{D^{\frac{1}{2}-\epsilon}}\\
\le & 3\Delta(1-\frac{1}{D^{\frac{1}{2}-\epsilon}})+d(u)(2-\frac{3}{D^{\frac{1}{2}-\epsilon}})+5D^{1/2+\epsilon}-5+\frac{9}{D^{\frac{1}{2}-\epsilon}}\\
\le & 3(D+D^{\frac{1}{2}-\epsilon})(1-\frac{1}{D^{\frac{1}{2}-\epsilon}})+D(2-\frac{3}{D^{\frac{1}{2}-\epsilon}})+5D^{1/2+\epsilon}-5+\frac{9}{D^{\frac{1}{2}-\epsilon}}\\
\le & 5D+3D^{\frac{1}{2}-\epsilon}-D^{1/2+\epsilon}-8+\frac{9}{D^{\frac{1}{2}-\epsilon}}\le 5D-D^{1/2-\epsilon}+1.
 \end{align*}

The last inequality holds because $D^{1/2+\epsilon}\ge 4D^{1/2-\epsilon}$ and $\frac{9}{D^{\frac{1}{2}-\epsilon}}\le 9$ when $D\ge 4^{\frac{1}{2\epsilon}}$.  Therefore, the edge $uv_{i,j}$ can be colored, which implies that we can color all the uncolored edges. 

\medskip

{\bf Case 2.}  $t_1\ge D^{1/2-\epsilon}$. In this case, $\Delta\le D+2$. 

\medskip

Let $G'$ be the graph obtained from $G$ by deleting the edge $uv_{1,1}$ and adding up to two pendant edges notated $\{w_1v'$, $w_1v''\}$ so that $w_1$ has at least three pendant edges and continues to have maximum degree at most $D+2$. 

Note that for $1\le i\le s$ and $1\le j\le t_s$, 
\begin{equation}
|N_2(uv_{i,j})|\leq d(u_1)+d(u_2)+d(w_i)+d(u)-3+d(u)-2-t_i\le 3\Delta+2D-5-t_i\le 5D+1-t_i.
\label{eq1}
\end{equation}

We observe that $G'$ is 2-degenerate,  has fewer vertices of degree at least $2$, and contains at least $d(v)-D$ vertices of degree one if $d(v)>D$. So, $G'$ can be colored with $5D-D^{1/2-\epsilon}+2$ colors. Keep the colors of the edges in both $G$ and $G'$, we try to get a valid coloring of $G$. 

{\bf Case 2.1.} the color of edge $v_{1,1}w_1$ is different from ones on $\{uu_1, uu_2, uv_{1,2}, \ldots , uv_{s,t_s}\}$.  By \eqref{eq1}, we have at most $5D+1-D^{1/2-\epsilon}$ edges in $N_2(uv_{1,1})$. Since there are $5D-D^{1/2-\epsilon}+2$  different colors available, we can color $uv_{1,1}$ with a color not on the edges in $N_2(uv_{1,1})$.

{\bf Case 2.2.}  the color of edge $v_{1,1}w_1$ is the same as the color of edge $uu_1$ (or edge $uu_2$).  In this case, we swap the color of $v_{1,1}w_1$ with the color of $w_1v'$ or $w_1v''$.  By \eqref{eq1} again, $N_2(uv_{1,1})$ has at most $5D+1-D^{1/2-\epsilon}$ edges in $N_2(uv_{1,1})$. Since there are $5D-D^{1/2-\epsilon}+2$  different colors available, we can color $uv_{1,1}$ with a color not on the edges in $N_2(uv_{1,1})$.

{\bf Case 2.3.} the color of edge $v_{1,1}w_1$ is the same as the color of edge $uv_{i,j}$ for some $2\leq i\leq s$.  In this case, we uncolor the edges $uv_{i,j}, uv_{1,1}, \ldots, uv_{1,t_1}$, and recolor them in the order. By \eqref{eq1}, there are at most $5D+1-t_i-(t_1-1)\le 5D-D^{1/2-\epsilon}+1$ colors on edges in $N_2(uv_{i,j})$, so $uv_{i,j}$ can be colored. Similarly, for each $j$, there are at most $5D-D^{1/2-\epsilon}+1$ colors on edges in $N_2(uv_{1,j})$, so $uv_{1,j}$ can be colored as well.

In any case, $G$ can be colored with $5D-D^{1/2-\epsilon}+2$ colors. So $\chi_s'(G)\le 5D-D^{1/2-\epsilon}+2$.\end{proof}

Observe that our main result follows from Theorem~\ref{main} with $\Delta=D$.  Our proof does not work for corresponding result of list version.

\end{document}